\def\H{\mathcal{H}}
\def\i{\mathrm{i}}
\def\im{\operatorname{Im}}
\def\re{\operatorname{Re}}
\def\B{\mathcal{B}}
\def\A{\mathcal{A}}
\def\L{\operatorname{L}}
\def\sh{\operatorname{sh}}
\def\Sh{\operatorname{Sh}}
\def\M{\mathrm{M}}
\def\Mm{\mathcal{M}}
\def\Linf{\L^\infty(\S,\mathbb{C})}
\def\Ltwo{\L^2(\S,\mathbb{C})}
\def\sgn{\operatorname{sgn}}
\def\m{\operatorname{m}}
\def\id{\operatorname{id}}
\def\tr{\operatorname{tr}}
\def\S{\mathrm{S}^1}
\def\CC{\mathbb{C}}
\def\G{\mathcal{G}}
\newtheorem{lemma}{Lemma}[section]
\newtheorem{theorem}[lemma]{Theorem}
\newtheorem{corollary}[lemma]{Corollary}
\newtheorem{proposition}[lemma]{Proposition}
\newtheorem{remark}[lemma]{Remark}
\newtheorem{question}[lemma]{Question}
\title{Invariant embeddings and ergodic obstructions}
\author{Mitja Mastnak}
\address{Department of Mathematics and Computing Science, Saint Mary's University, 923 Robie St, Halifax, Nova Scotia, Canada B3N 1Z9}
\email{mitja.mastnak@smu.ca}
\author{Heydar Radjavi}
\address{Department of Pure Mathematics, University of Waterloo, Waterloo, Ontario,
Canada N2L 3G1}
\email{hradjavi@uwaterloo.ca}
\subjclass{47L30, 46L40, 47D03}
\keywords{maximal abelian self-adjoint algebra, ergodic map, invariant under unitary conjugation}
\date{\today}
\begin{document}

\begin{abstract} We consider the following question: Let $\A$ be an abelian self-adjoint algebra of bounded operators on a Hilbert space $\H$. Assume that $\A$ is invariant under conjugation by a unitary operator $U$, i.e., $U^* AU$ is in $\A$ for every member $A$ of $\A$. Is there a maximal abelian self-adjoint algebra containing $\A$, which is still invariant under conjugation by $U$?  The answer, which is easily seen to be yes in finite dimensions, is not trivial in general. We prove affirmative answers in special cases including the one where $\A$ is generated by a compact operator.  We also construct a counterexample in the general case, whose existence is perhaps surprising.
\end{abstract}



\maketitle
\section{Introduction}
Our general problem of interest can be stated in the following way.  Consider an arbitrary abelian algebra $\mathcal{A}$ of linear operators on a vector space $\mathcal{V}$.  Assume that $\mathcal{A}$ is mapped to itself by a given bijective linear isometry of the underlying space, that is, there exists a linear isometry $U\colon \mathcal{V}\to\mathcal{V}$ satisfying $U^{-1} A U\in\mathcal{A}$ for every $A\in\mathcal{A}$.  The question is: can $\A$ be extended to a maximal abelian algebra $\widehat{\A}\supseteq\A$ which is still mapped into itself by the same isometry $U$. The answer is easily seen to be yes if $U$ belongs to $\A$, but not in general as will be shown.

Perhaps it should be emphasized here that the question is being asked about a given, fixed isometry $U$, and not just about its effect on the given algebra $\A$.  In other words, the question is much easier to answer if we allow an isometry $U_0$ and a maximal abelian algebra $\widehat{\A}$ such that $\A\subseteq\widehat{\A}$,
$$ U_0^* A U_0=U^* A U\mbox{ for all } A\in\A,\mbox{ and } U_0^*\widehat{\A} U_0\subseteq \widehat{\A}.$$
To see this, just observe the following: the commutative $C^*$-algebra $\A$ is singly generated.  Let $A$ be a generator.  By the Spectral Theorem \cite[Theorem 4.6]{C}, $A$ can be expressed as a multiplication operator $M_\varphi$ on a space $\L^2(X,\mu)$ with a measure $\mu$, i.e., there is an $\L^\infty$ function $\varphi$ such that $(Af)(x)=(M_\varphi f)(x)=\varphi(x)f(x)$ almost everywhere.  Then, since $U^*\A U\subseteq \A$, we have that $U^* A U=M_\psi$ with $\psi\in\L^\infty(X,\mu)$.  Now suppose that there exists a ``permutation" of the underlying space $X$, i.e., a measure-preserving transformation $\tau$ of $X$ onto itself such that $\psi=\varphi\circ\tau$ (such a $\varphi$ clearly cannot not always exits, but it does for our ergodic counterexample discussed in Section 3).   Now consider the maximal extension 
$$\widehat{\A}=\left\{M_\xi: \xi\in\L^\infty(X,\mu)\right\}$$
and define $U_0$ on $\L^2(X,\mu)$ by $U_0 f = f\circ\tau^{-1}$.  Then, for every $M_\xi$ in $\widehat{\A}$ we have 
$$ U_0^* M_\xi U_0=M_{\xi\circ\tau}$$
and $U_0^* T U_0=U^* T U$ for all $T$ in the original $\A$.  Thus the ``effect" of conjugation by $U$ has been extended from $\A$ to $\widehat{A}$.

Returning to our current problem, let us note that with the original $\A$ and $U$ with the newly defined $U_0$, the operator 
$$
V=U U_0^*
$$
commutes with every member of $\A$.  Thus $U=V U_0$, where $V$ is in the commutant of $\A$ and $U_0$ is a composition by a measure-preserving bijection.  It is interesting to observe that if either of the two unitaries $U_0$ and $V$ is scalar, the problem is trivialized.

More general questions can be asked. For example, instead of a single isometry, one can consider a whole group $\G$ of isometries and ask the same question under the hypothesis $U^{-1} A U\in \A$ for all $U\in \G$ and all $A\in \A$ (see, e.g., \cite{MR}).

In this paper, the underlying space will be a complex separable Hilbert space $\H$, and $\A$ will be an abelian self-adjoint subalgebra of $\B(\H)$, the algebra of all bounded linear operators on $\H$.
We show that under certain conditions, the assumption $U^*\A U$ implies the existence of a maximal abelian self-adjoint algebra (masa) $\widehat{A}\supseteq\A$ with $U^*\widehat{\A} U\subseteq \A$ as desired.  In particular, if $\A$ is generated by its compact operators, this is true (as it is also 
in the special case of finite dimensional $\H$ \cite{MR}).

We also present a, perhaps unexpected, counterexample in general.  In this counterexample
ergodic operators play an important role.

\section{Invariant Embedding Theorem}
Throughout the section $\H$ will denote an infinite dimensional separable Hilbert space.  If $U\in \B(\H)$ is unitary and $\A\subseteq\B(\H)$, then we say that $\A$ is $U$-invariant, or invariant under conjugation by $U$ if
$U^*\A U\subseteq \A$.  We first observe, that this assumption of inclusion can, with no loss of generality, be replaced with equality.

\begin{proposition} Let $\A$ be a $U$-invariant abelian self-adjoint algebra.  Then $\A$ can be embedded into an abelian self-adjont algebra $\widehat{A}$ such that $U^*\widehat{\A} U=\widehat{A}$. If $\A$ is generated by its compact operators, we can also require this for $\widehat{A}$.  
\end{proposition}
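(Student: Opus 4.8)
The plan is to manufacture two-sided invariance out of the given one-sided inclusion by passing to an increasing union of conjugates. From $U^*\A U\subseteq\A$ one obtains, after conjugating on the outside by $U$, the reverse inclusion $\A = U(U^*\A U)U^* \subseteq U\A U^*$; iterating yields an increasing chain $\A\subseteq U\A U^*\subseteq U^2\A U^{*2}\subseteq\cdots$ of abelian self-adjoint algebras. I would set $\widehat{\A}$ to be the norm closure of $\widehat{\A}_0=\bigcup_{n\ge 0}U^n\A U^{*n}$.

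First I would check that $\widehat{\A}_0$ is an abelian self-adjoint algebra: any two of its elements already lie in a common term $U^m\A U^{*m}$ of the chain (take the larger index), which is abelian, self-adjoint, and closed under the algebra operations because $T\mapsto U^m T U^{*m}$ is a $*$-isomorphism of $\B(\H)$. These properties pass to the increasing union and then to its closure $\widehat{\A}$, using that conjugation by a unitary is an isometric $*$-automorphism. Clearly $\A=U^0\A U^{*0}\subseteq\widehat{\A}$.

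The heart of the argument is the equality $U^*\widehat{\A}U=\widehat{\A}$. Conjugating the union term by term gives $U\widehat{\A}_0 U^*=\bigcup_{n\ge 0}U^{n+1}\A U^{*(n+1)}=\bigcup_{m\ge 1}U^m\A U^{*m}$, and since the chain is increasing the missing $m=0$ term $\A$ is already contained in the $m=1$ term $U\A U^*$; hence $U\widehat{\A}_0 U^*=\widehat{\A}_0$, equivalently $U^*\widehat{\A}_0 U=\widehat{\A}_0$. Taking closures and invoking continuity of $T\mapsto U^*TU$ yields $U^*\widehat{\A}U=\widehat{\A}$. This telescoping — and in particular getting the direction of the inclusions right so that nothing is lost at the $m=0$ end — is the one genuinely load-bearing step; everything else is bookkeeping.

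For the final clause I would track compactness through the same construction. Since conjugation by $U^n$ preserves the ideal $\mathcal{K}$ of compact operators, the compact part of each term is $(U^n\A U^{*n})\cap\mathcal{K}=U^n(\A\cap\mathcal{K})U^{*n}$, and applying the $*$-isomorphism $T\mapsto U^n T U^{*n}$ to the hypothesis that $\A$ is generated by $\A\cap\mathcal{K}$ shows that each term $U^n\A U^{*n}$ is generated by its own compact operators. The $C^*$-algebra generated by $\mathcal{C}=\bigcup_n\bigl(U^n\A U^{*n}\cap\mathcal{K}\bigr)$ therefore contains every $U^n\A U^{*n}$, hence contains $\widehat{\A}_0$ and its closure $\widehat{\A}$; since $\mathcal{C}$ consists of compact operators lying in $\widehat{\A}$, this exhibits $\widehat{\A}$ as generated by its compact operators. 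The only point to watch here is that compact-generation survives the norm closure, which it does precisely because the compacts of the individual terms already generate each term.
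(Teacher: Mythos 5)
Your proposal is correct and is essentially the paper's own argument: the paper's entire proof is the one line ``Take $\widehat{\A}=\bigcup_{n=1}^\infty U^n\A (U^*)^n$,'' and your telescoping verification (including the observation that $U^*\A U\subseteq\A$ flips to $\A\subseteq U\A U^*$, making the chain increasing so that nothing is lost at the bottom end) is exactly the check the authors leave to the reader. Your additional care with the norm closure and with tracking compact generation through the conjugates fills in the details of the same construction rather than departing from it.
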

\begin{proof}
Take $\widehat{A}=\bigcup_{n=1}^\infty U^n\A (U^*)^n$.
\end{proof}

\begin{remark} If $U$ is such that $U^*$ is in the weak operator closure of the algebra generated by $U$ (for, example, if $U$ is discrete in the sense that its eigenvectors generate $\H$), then clearly $U^*\A U\subseteq \A$ implies that $U^*\A U= \A$.  However, even if $U$ is discrete and multiplicity free, it does not follows that $\A$ is discrete.  For example, let $\H=\ell^2(\mathbb{Z})$, let $\A$ be generated by the bilateral shift (i.e., operator $A$ given by $A e_i=e_{i+1}$) and $U$ be the diagonal operator given by $Ue_i=t^i e_i$, where $t\in\CC$ is some fixed transcendental number.
\end{remark}

Throughout the section $(X,\mu)$ will denote a $\sigma$-finite complete measure space. Let $\H=\L^2(X,\mu)$.  Let $\varphi\colon X\to X$ be an almost bijection such that the measures
$\mu$ and $\nu=\mu\circ\varphi^{-1}$ are equivalent (i.e., mutually absolutely continuous) and $\varphi$ is an almost isomorphism between measure spaces $(X,\nu)$ and $(X,\mu)$.  To spell it out, $\varphi$ preserves measurable sets and null-sets in both directions, i.e., for every set $A\subseteq X$ we have that $A$ is measurable if and only if $\varphi^{-1}(A)$ is, and $A$ is a null-set if and only $\varphi^{-1}(A)$ is.   Let $h=\frac{d\mu}{d\nu}\colon X\to[0,\infty)$ be the Radon-Nikodym derivative of $\mu$ with respect to $\nu$, i.e., $h$ is a measurable function such that for every measurable set $A$ we have that $\mu(A)=\int_A h d\nu$.  Then the weighted composition operator $W=W_{h,\varphi}\in\B(\H)$ given by 
$$W f = (\sqrt{h} f)\circ\varphi $$
is unitary.  Indeed, if $f\in \H$, then
\begin{eqnarray*}
|| W f ||^2 &=& \int_X |(\sqrt{h} f)\circ\varphi|^2 d\mu \\
&=& \int_X (h |f|^2)\circ\varphi  d\mu \\
&=& \int_X h|f|^2 d \nu \\
&=& \int_X |f|^2 d\mu = ||f||^2
\end{eqnarray*}
We note that if $f,g\in\L^\infty(X,\mu)$ are such that $g=f\circ\varphi$ a.e., then the multiplication operators $\M_f$ and $\M_g$ are unitarily equivalent.  Indeed, if $k\in\L^2(X,\mu)$, then
$$
(W \M_f) k = (\sqrt{h} f k)\circ \varphi = g((\sqrt{h}k)\circ\varphi)  
= \M_g (W k).
$$
In general the converse of the above fails.  For example, if $X$ is the disjoint union of an 
infinite discrete measure space $X_0$ (say $\mathbb{N}$ with counting measure) and a nontrivial continuous measure space $X_1$ (say $[0,1]$ with Lebesgue measure), then for $f=\chi_{X_0}$ and $g=\chi_{X_1}$, the multiplication operators $\M_f$ and $\M_g$ are clearly unitarily equivalent (they are projections of infinite rank and co-rank), but clearly there is no $\varphi$ as above such that $g=f\circ\varphi$ (in fact, there is no almost bijection such that $g=f\circ\varphi$ a.e.).
However, if $(X,\mu)$ is discrete, then the converse holds as well.
\begin{proposition}\label{prop-bijection}
Let $(X,\mu)$ be a countable discrete measure space, let $\H=\L^2(X,\mu)$, and let $f,g\in\L^\infty(X,\mu)$.  If the multiplication operators $\M_f,\M_g$ are unitarily equivalent, then there is
a bijection $\varphi\colon X\to X$ such that $g=f\circ\varphi$.  
\end{proposition}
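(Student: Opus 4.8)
The plan is to reduce the statement to a purely combinatorial comparison of the level sets of $f$ and $g$, using the fact that on a discrete space every multiplication operator is literally diagonal. First I would fix the orthogonal family $\{e_x = \chi_{\{x\}} : x\in X\}$, which spans $\H$, and observe that $\M_f e_x = f(x)e_x$; thus $\M_f$ is a diagonal operator whose eigenvalues are exactly the values attained by $f$. For each $\lambda\in\CC$ the eigenspace $\ker(\M_f-\lambda)$ consists of the functions supported on the level set $f^{-1}(\lambda)=\{x: f(x)=\lambda\}$ and is the closed span of $\{e_x : f(x)=\lambda\}$; hence its Hilbert-space dimension equals the cardinality $|f^{-1}(\lambda)|$. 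The weights $\mu(\{x\})$ play no role here beyond being positive, since the requested $\varphi$ need not preserve $\mu$, and points of measure zero (if the convention permits them) contribute trivially and may be matched separately.

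Next I would exploit the unitary equivalence. If $U$ is unitary with $U\M_f U^* = \M_g$, then for any eigenvector $\M_f v=\lambda v$ one has $\M_g(Uv)=U\M_f U^* Uv=\lambda Uv$, so $U$ carries $\ker(\M_f-\lambda)$ into $\ker(\M_g-\lambda)$; applying the same reasoning to $U^*$ shows that $U$ restricts to an isometric isomorphism between these two eigenspaces. Consequently $\dim\ker(\M_f-\lambda)=\dim\ker(\M_g-\lambda)$, that is $|f^{-1}(\lambda)|=|g^{-1}(\lambda)|$, for every $\lambda\in\CC$. In particular $f$ and $g$ attain exactly the same values, namely those $\lambda$ for which the common cardinality is nonzero.

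Finally I would assemble $\varphi$ from this data. Because $X$ is countable, each $|f^{-1}(\lambda)|$ is a finite integer or $\aleph_0$, so the equality $|g^{-1}(\lambda)|=|f^{-1}(\lambda)|$ yields a bijection $\varphi_\lambda\colon g^{-1}(\lambda)\to f^{-1}(\lambda)$. The level sets $\{g^{-1}(\lambda)\}_\lambda$ partition $X$, and the matching level sets $\{f^{-1}(\lambda)\}_\lambda$ partition $X$ as well over the same index set of attained values; gluing the $\varphi_\lambda$ then produces a single bijection $\varphi\colon X\to X$. By construction $f(\varphi(x))=\lambda=g(x)$ whenever $x\in g^{-1}(\lambda)$, so $g=f\circ\varphi$, as required.

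I do not expect a serious obstacle. The one point that must be handled with care is the bookkeeping of multiplicities: verifying that the complete unitary invariant of such a diagonal operator is precisely the level-set cardinality function $\lambda\mapsto|f^{-1}(\lambda)|$ (allowing the value $\aleph_0$), and that the individual bijections $\varphi_\lambda$ can be glued simultaneously over all $\lambda$. It is worth noting that the failure of the analogous converse in the continuous setting, recorded just before the statement, is exactly the failure of eigenspaces to be available there, which is why discreteness is essential to this argument.
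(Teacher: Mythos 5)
Your proposal is correct and follows essentially the same route as the paper's own (very brief) proof: identify the cardinality of each level set $f^{-1}(\lambda)$ with the dimension of the eigenspace of $\M_f$ at $\lambda$, deduce $|f^{-1}(\lambda)|=|g^{-1}(\lambda)|$ from unitary equivalence, and glue level-set bijections. You simply spell out the details (including the harmless caveat about null points) that the paper leaves implicit.
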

\begin{proof} This immediately follows from the fact that for each element $\lambda$ in the range of $f$ we have that the cardinality of $f^{-1}(\lambda)$ is the dimension of the eigenspace of the eigenvalue $\lambda$ of $\M_f$ and hence must be the same as the cardinality of $g^{-1}(\lambda)$, the dimension of the corresponding eigenspace for $\M_g$.
\end{proof}

\begin{question} Does the analogous result also hold for ``nice" continuous measure spaces (e.g., standard probability spaces)?
\end{question}

\begin{theorem}[Invariant Masa Embedding Theorem.]\label{thm-invariant}
Let $\H=L^2(X)$ where $X$ is a discrete measure space, let $U\in\B(\H)$ be a unitary operator and let $\A$ be an abelian self-adjoint algbera that is contained in the multiplier algebra $\M(X)\simeq\L^\infty(X)$ of multiplication operators.  If $U^*\A U=\A$, then $\A$ can be embedded into a $U$-invariant masa.
\end{theorem}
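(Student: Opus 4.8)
The plan is to reduce the problem to a block-by-block construction indexed by the orbits of a permutation that $U$ induces on the ``atoms'' of $\A$, and then to exploit the observation that any masa containing a given unitary is automatically invariant under conjugation by that unitary.

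First I would pass from $\A$ to its weak closure $\A''$. Since conjugation by a unitary is a weakly continuous $*$-automorphism of $\B(\H)$, the hypothesis $U^*\A U=\A$ gives $U^*\A''U=\A''$; moreover any masa $\widehat\A$ containing $\A$ automatically contains $\A''$ (a masa equals its own commutant, hence is weakly closed, so $\A\subseteq\widehat\A$ forces $\A''\subseteq\widehat\A''=\widehat\A$). Thus there is no loss in extending $\A''$ instead. Because $\A''\subseteq\M(X)$ and $X$ is discrete, $\A''$ is atomic: its minimal projections $\{P_B\}$ partition $X$ into blocks $B$, with $\H=\bigoplus_B\H_B$, where $\H_B=P_B\H=\L^2(B)$, and $\A''=\{\sum_B\lambda_B P_B:(\lambda_B)\in\ell^\infty\}$ acting as scalars on each block.

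Next I would record how $U$ interacts with this decomposition. The automorphism $A\mapsto U^*AU$ permutes the minimal projections of $\A''$, so there is a bijection $\sigma$ of the blocks with $U^*P_BU=P_{\sigma(B)}$; equivalently $P_BU=UP_{\sigma(B)}$, so $U$ restricts to a unitary isomorphism $U_B:=U|_{\H_{\sigma(B)}}\colon\H_{\sigma(B)}\to\H_B$ (in particular all blocks in one $\sigma$-orbit are mutually isomorphic). Any masa $\widehat\A$ containing $\A''$ must commute with every $P_B$, hence is block diagonal of the form $\widehat\A=\bigoplus_B\Mm_B$ with each $\Mm_B$ a masa in $\B(\H_B)$; conversely every such direct sum is a masa, since its commutant is block diagonal and restricts to $\Mm_B'=\Mm_B$ on each block. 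A short computation then shows that $U^*\widehat\A U=\widehat\A$ is equivalent to the family of conditions
$$ \Mm_{\sigma(B)}=U_B^*\,\Mm_B\,U_B \qquad\mbox{for every block } B. $$

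Finally I would build the $\Mm_B$ orbit by orbit. On a bi-infinite ($\mathbb{Z}$-)orbit there is no constraint: fix a base block, choose any masa there, and propagate by the displayed relation in both directions. The delicate case — and the step I expect to be the main obstacle — is a finite cycle $B_0\to B_1\to\cdots\to B_{n-1}\to B_0$, where propagating all the way around imposes the consistency requirement that $\Mm_{B_0}$ be invariant under conjugation by the unitary $V:=U_{B_0}U_{B_1}\cdots U_{B_{n-1}}\in\B(\H_{B_0})$. Here the key point is that I need not diagonalize $V$, which may be impossible when $\H_{B_0}$ is infinite dimensional: instead I choose $\Mm_{B_0}$ to be any masa of $\B(\H_{B_0})$ containing $V$, obtained by extending the abelian von Neumann algebra generated by $V$ and $V^*$. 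Since $V$ then lies in the abelian algebra $\Mm_{B_0}$, one has $V^*\Mm_{B_0}V=\Mm_{B_0}$ automatically, so the cycle closes up consistently. Propagating to the remaining blocks of each orbit and assembling $\widehat\A=\bigoplus_B\Mm_B$ yields the desired $U$-invariant masa containing $\A$.
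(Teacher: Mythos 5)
Your proposal is correct and follows essentially the same route as the paper: decompose $\H$ into the blocks (minimal projections) of the algebra, observe that $U$ permutes them, split the index set into orbits, choose masas freely on bi-infinite orbits and, on a finite cycle, choose a masa on the base block invariant under the return unitary and propagate it around. The only differences are cosmetic improvements: you bypass the paper's factorization $U=VW$ (commutant part times weighted permutation) by working directly with the restricted unitaries $U_B$, and you explicitly justify the existence of the cycle-invariant masa $\Mm_{B_0}$ by taking any masa containing $V$, a point the paper asserts without proof.
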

\begin{proof}
Note that we have a disjoint union decomposition 
$$
X=\bigcup_{j\in J} X_j,
$$
such that
$$
\A=\left\{M_f : f\mbox{ is constant on each } X_j\right\}. 
$$
We write $U=VW$, where $V$ is in the commutant of $\A$ (or, equivalently, has block decomposition $V=\bigoplus_{j\in J} V_j$, where
$V_j\in\mathcal{B}{L^2(X_j)}$, and $W$ is the weighted composition operator with a bijection $\varphi\colon X\to X$ (i.e., $W f = (\sqrt{h}f)\circ\varphi$, where $h$ is the Radon-Nikodym derivative of $\mu$ with respect to $\nu=\mu\circ\varphi^{-1}$)  Observe that, since $X$ is discrete, we have that for all $j,k\in J$, 
$$\left(\varphi(X_j)\cap X_j\not=\emptyset\right)\iff \varphi(X_j)=X_k.$$
With this in mind, $\varphi$ induces a bijection $\pi\colon J\to J$ by $$\pi(j)=k \iff \varphi(X_j)=X_k.$$
This bijection splits $J$ into a disjoint union of ``cycles":
$$
J=\bigcup_{k\in K} J_k,
$$
where for each $k$, $\pi(J_k)=J_j$ and either $|J_k|=1$, or $\pi$ has no fixed points on $J_k$.  For each $k$, let $n_k=|J_k|$ (possibly infinity) and pick an element $a^{(0)}_k\in J_k$ and label the other elements in $J_k$ as follows:
\begin{itemize}
\item If $|J_k|=1$, then there are no other elements.
\item If $1<|J_k|<\infty$, then let $a^{(r)}_k=\pi^r(a^{(0)}_k)$ for $r\in\mathbb{Z}/n_k \mathbb{Z}$.
\item If $|J_k|=\infty$, then let $a^{(r)}_k=\pi^r(a^{(0)}_k)$ for $r\in\mathbb{Z}$.
\end{itemize}
Here, for $r>0$, $\pi^r=\pi\circ\pi\circ\ldots\circ\pi$ is the $r$-fold composition and for $r<0$ we have $\pi^r=(\pi^{-1})^{|r|}$ is the $|r|$ fold composition of the inverse $\pi^{-1}$ of $\pi$.  Now we define maximal abelian self-adjoint algebras $\A_j\subseteq \mathcal{B}(L^2(X_j))$ for $j\in J$ as follows.  

Let $k\in K$.  Then
\begin{itemize}
\item If $|J_k|=n_k<\infty$, then observe that
$\mathcal{B}(L^2(X_{a^{(0)}_k})$ is invariant under conjugation by $U^{n_k}$. Let $\mathcal{B}_k$ be a $U^{n_k}$-invariant masa in 
$\mathcal{B}(L^2(X_{a^{(0)}_k})$. For $r=0,\ldots, n_k$ we then define
$\mathcal{A}_{a^{(r)}_k}=(U^*)^r \mathcal{B}_k U^r\subseteq \mathcal{B}(L^2(X_{a^{(r)}_k})$.
\item If $|J_k|=\infty$, then let $\mathcal{B}_k$ be any masa in 
$\mathcal{B}(L^2(X_{a^{(0)}_k})$.  For For $r\in\mathbb{Z}$ we then define
$\mathcal{A}_{a^{(r)}_k}=(U^*)^r \mathcal{B}_k U^r\subseteq \mathcal{B}(L^2(X_{a^{(r)}_k})$.
\end{itemize}
This defines $\A_j$, since every $j\in J$ can be written uniquely as one of $a^{(r)}_k$ above.

We now note that $\B=\bigoplus_{j\in J} \A_j$ is a $U$-invariant masa.

\end{proof}

\begin{remark} In the proof of the theorem above, it is crucial to have equality $U^*\A U=\A$.  It is fairly easy to see that, in general, it is not possible to embed a $U$-invariant discrete $\A$ into a discrete $\widehat{A}$ satisfying $U^*\widehat{\A} U=\widehat{A}$.  
Also note that the masa $\widehat{\A}$ we construct in the above proof, is in general not discrete.
At this point we do not know whether the weaker condition $U^*\A U\subseteq \A$ for a discrete $\A$ also yields the same conclusion.
\end{remark}

\begin{corollary} Let $U\in \B(\H)$ be a unitary operator and let $\A\subseteq\B(\H)$ be an abelian self-adjoint algebra that is invariant under conjugation by $U$.  If $\A$ is the weak operator-closure of the subalgebra of its compact operators, then there exists a masa $\B\subseteq\B(\H)$ that contains $\A$ and is also invariant under conjugation by $U$.
\end{corollary}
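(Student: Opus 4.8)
The plan is to reduce the statement to the Invariant Masa Embedding Theorem (Theorem~\ref{thm-invariant}) by first uncovering the structure forced on $\A$ by the hypothesis that it is the weak operator closure of its compact operators. The guiding idea is that ``generated by compacts'' makes $\A$ atomic with finite-dimensional atoms, which is exactly the discrete picture that Theorem~\ref{thm-invariant} handles.

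First I would arrange equality in place of inclusion. By the first Proposition of this section, the algebra $\bigcup_{n\ge 0}U^n\A(U^*)^n$ is abelian and self-adjoint, conjugation by $U$ maps it onto itself, and (since conjugation by a unitary preserves compactness and weak operator closures) it is again the weak operator closure of its compact operators. Replacing it by its weak operator closure keeps all of these properties, and any $U$-invariant masa containing this larger algebra contains $\A$. Hence I may assume from the start that $\A$ is a von Neumann algebra with $U^*\A U=\A$ that is the weak operator closure of its compact part $\A_c$.

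The structural heart of the argument is the following. The algebra $\A_c$ is an abelian $C^*$-algebra of compact operators, and every self-adjoint element of $\A_c$ has finite-rank spectral projections for its nonzero eigenvalues, which lie in the von Neumann algebra $\A$. By the spectral theorem for such algebras this forces $\A$ to be atomic with \emph{finite-rank} minimal projections: writing $\H_0=\big(\bigcap_{A\in\A}\ker A\big)^\perp$, there is an orthogonal family of finite-rank projections $\{P_j\}_{j\in J}$ in $\A$ with $\sum_{j}P_j=P_{\H_0}$ (strong sum) and
$$\A=\Big\{\textstyle\sum_{j}c_jP_j\oplus 0:(c_j)\in\ell^\infty(J)\Big\},$$
the operators that are scalar on each block $\H_j=\operatorname{range}(P_j)$ and vanish on $\H_0^\perp$. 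Proving this diagonal/atomic description carefully is the step I expect to be the main obstacle, even though it is a standard consequence of the structure theory of abelian $C^*$-algebras of compacts.

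With the decomposition in hand I would check that $U$ respects it and then invoke Theorem~\ref{thm-invariant}. Since $\H_0^\perp=\{x:Ax=0\ \forall A\in\A\}$ and $\A=U\A U^*$, one sees that $\H_0^\perp$ is $U$-reducing, hence so is $\H_0$; thus $U=U_0\oplus U_1$ with $U_0,U_1$ unitary on $\H_0,\H_0^\perp$, and $U_0^*\A_0U_0=\A_0$ where $\A_0=\A|_{\H_0}$. Choosing an orthonormal basis of $\H_0$ refining the blocks $\H_j$ identifies $\H_0$ with $\L^2(X)$ for a discrete space $X=\bigsqcup_j X_j$ with each $X_j$ finite, and realizes $\A_0$ as a self-adjoint subalgebra of the multiplier algebra $\M(X)$ satisfying $U_0^*\A_0U_0=\A_0$; Theorem~\ref{thm-invariant} then produces a $U_0$-invariant masa $\B_0\supseteq\A_0$ in $\B(\H_0)$. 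On the complementary summand $\A$ is trivial, so I only need any $U_1$-invariant masa $\B_1\subseteq\B(\H_0^\perp)$: by Zorn's lemma extend the abelian algebra generated by $U_1$ to a masa $\B_1$; since $U_1\in\B_1$ and $\B_1$ is abelian, $U_1$ commutes with all of $\B_1$, so $\B_1$ is $U_1$-invariant automatically. Setting $\B=\B_0\oplus\B_1$, the fact that $\B$ contains the reducing projections $P_{\H_0}$ and $P_{\H_0^\perp}$ forces every operator commuting with $\B$ to be block diagonal with each block in $\B_i'=\B_i$, whence $\B'=\B$ and $\B$ is a masa in $\B(\H)$. Finally $\B\supseteq\A=\A_0\oplus 0$ and $U^*\B U=(U_0^*\B_0U_0)\oplus(U_1^*\B_1U_1)=\B$, which is the required conclusion.
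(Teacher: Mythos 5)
Your proposal is correct and follows essentially the route the paper intends: the corollary is stated as an immediate consequence of Theorem~\ref{thm-invariant}, and your argument supplies exactly the expected reduction (pass to equality via the Proposition, observe that an abelian self-adjoint algebra generated by its compacts is atomic with finite-rank minimal projections, hence discrete on the essential part, split off the kernel summand, and apply the theorem). The details you flag as standard (the atomic structure of abelian $C^*$-algebras of compact operators, and the direct-sum masa on $\H_0^\perp$) all check out.
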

\qed

\section{Ergodic counterexample}

In this section we give an example of a unitary $U\in\B(\H)$ and a $U$-invariant abelian self-adjoint algebra $\A$ (in fact $U^*\A U=\A$) that does not embed into any $U$-invariant masa. The main ingredient of the proof is the ergodicity of irrational shift operators.  For the general theory of ergodic maps we refer to the Section 1.5 of \cite{W}.

Let $\S=\mathbb{R}/\mathbb{Z}$.  We often identify it with $[0,1)$ and also think of it as a circle of circumference $1$.  Let $\mathcal{H}_0=\Ltwo$ and let $\mathcal{H}=\mathcal{H}_0\oplus\mathcal{H}_0$.  In what follows we will identify $\B(\H)$ with $\mathcal{M}_2(\B(\H_0))$.   For any $a\in [0,1)$ we denote by $\sh_a\colon\S \to\S$ the shift operator given by $\sh_a(x)=a+x\,(\mbox{mod }\mathbb{Z})$ (if we think of $\S$ as a circle, then this is a rotation by angle $2\pi a$).  We will use the fact that $\sh_a$ is ergodic, i.e., that there are no nontrivial essentially invariant subsets (since $\sh_a$ is bijective, this means that whenever a measurable set $X\subseteq\S$ is such that the measure of the symmetric difference $X\triangle \sh_a(X)$ is zero, then one of the sets $X$, $\S\setminus X$ must have measure $0$).  We will also need the fact that powers of $\sh_a$ and those of
the relative shift $\sh'_1\colon [0,a)\to [0,a)$, $t\mapsto t+1 \mod a$, 
are ergodic as well (see the last two paragraphs of the proof of Theorem \ref{thm-nonexistence} for more detail).

We denote by $\Sh_a\in\B(\H_0)$ the composition by the above-defined shift operator, i.e., $\Sh_a(f)=f\circ \sh_a$.  For a function $f\in\H_0$ we abbreviate $\sh_a(f)=f_a$.  For an operator $T=\begin{pmatrix} f & g\\h& k\end{pmatrix}\in\Mm_2(\H_0)$ we abbreviate 
$T_a=\begin{pmatrix} f _a & g_a\\h_a & k_a\end{pmatrix}$.

We identify $\Linf$ with a subalgebra $\A_0$ of $\B(\H_0)$ in the usual way.  More precisely,
for any $f\in\Linf$ we denote by $\M_f$ the corresponding multiplication operator, i.e., $\M_f(g)=f\cdot g$. Then $\A_0=\{M_f: f\in\Linf\}$. In a similar way, we identify $\Mm_2(\Linf)$ with a subalgebra of $\B(\H)$.   For $T=\begin{pmatrix} f  & g\\h & k\end{pmatrix}\in\Mm_2(\Linf) $ we use the notation 
$$T_{\M}=\begin{pmatrix} \M_f  & \M_g\\ \M_h & \M_k \end{pmatrix} \in\Mm_2(\A_0 )\subseteq \B(\H).$$ 
For any measurable subset $X\subseteq \S$ we will abbreviate $\M_X=\M_{\chi_X}$, where $\chi_X$ denotes the characteristic function of $X$.

We are now ready to define key unitary operators 
$$V_\M,W, U\in\mathcal{M}_2(\B(\H_0))=\B(\H):$$
Let $a$ be a fixed irrational number in the interval $\left(0,\frac{1}{4}\right)$, and let 
$$
J_1=[0,a), J_2=[a,4a),\mbox{ and }J_3=[4a,1).
$$  
We now define
\begin{eqnarray*}
V&=&\frac{1}{\sqrt{2}}\chi_{J_1} \left(\begin{array}{cc}
1 & -1 
\\
 1 & 1 
\end{array}\right) + \frac{1}{\sqrt{2}}\chi_{J_2} \left(\begin{array}{cc}
-1 & -1 
\\
 -\i  & \i 
\end{array}\right)
+\chi_{J_3} \left(\begin{array}{cc}
1 & 0 
\\
 0 & 1 
\end{array}\right),\\
W&=&\Sh_a \left(\begin{array}{cc}
1 & 0 
\\
 0 & 1 
\end{array}\right),
U=V_\M W.
\end{eqnarray*}
We will prove that the self-adjoint abelian algebra 
$$\mathcal{A}=\left\{ \M_f \left(\begin{array}{cc}
1 & 0 
\\
 0 & 1 
\end{array}\right): f\in\Linf\right\} \subseteq \mathcal{M}_2(\H_0)=\B(\H)$$
is $U$-invariant, but is not contained in any $U$-invariant masa $\mathcal{B}$.  We first prove the following.

\begin{lemma}\label{lem-projection}
Any masa $\B$ that contains $\A$ is generated by $\A$ and a projection $P_\M\in\Mm_2(\A_0)$, where for almost every $t\in \S$, $P(t)\in\Mm_2(\mathbb{C})$ is a rank-one projection.  
\end{lemma}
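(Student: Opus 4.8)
The plan is to identify every masa $\B\supseteq\A$ with a measurable field of masas of $\Mm_2(\CC)$ over $\S$, and then to extract the required rank-one projection fibrewise. First I would pin down where $\B$ lives: since $\B$ is maximal abelian we have $\B=\B'$, and $\A\subseteq\B$ forces $\B=\B'\subseteq\A'$, so it suffices to understand abelian subalgebras of $\A'$ that contain $\A$. Now $\A=\A_0\ot\CC I_2$ with $\A_0$ a masa in $\B(\H_0)$, so $\A_0'=\A_0$ and the commutation theorem gives $\A'=\A_0'\ot(\CC I_2)'=\A_0\ot\Mm_2(\CC)=\Mm_2(\A_0)$. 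Thus $\A\subseteq\B\subseteq\Mm_2(\A_0)$. Writing $\H=\H_0\oplus\H_0=\L^2(\S,\CC^2)=\int_\S^\oplus\CC^2\,dt$ as a direct integral with constant fibre $\CC^2$, the algebra $\A$ is exactly the diagonalizable algebra and $\Mm_2(\A_0)=\A'$ is exactly the algebra of decomposable operators, an element of which is a matrix function $t\mapsto T(t)\in\Mm_2(\CC)$.

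Next I would decompose $\B$ over $\S$. Because $\A\subseteq\B\subseteq\A'$, the theory of decomposable von Neumann algebras yields a measurable field $t\mapsto\B(t)$ of von Neumann subalgebras of $\Mm_2(\CC)$ with $\B=\int_\S^\oplus\B(t)\,dt$, and the commutant is computed fibrewise, $\B'=\int_\S^\oplus\B(t)'\,dt$. Hence the masa condition $\B=\B'$ is equivalent to $\B(t)=\B(t)'$ for almost every $t$, that is, to $\B(t)$ being a masa of $\Mm_2(\CC)$ almost everywhere. But every masa of $\Mm_2(\CC)$ is two-dimensional and has the form $\CC P(t)\oplus\CC(I-P(t))$ for a uniquely determined pair $\{P(t),I-P(t)\}$ of orthogonal rank-one projections. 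This already delivers the pointwise rank-one statement; what remains is to organize the $P(t)$ into a single element $P_\M\in\Mm_2(\A_0)$.

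To obtain measurability I would use that $\B$, being an abelian von Neumann algebra acting on the separable space $\H$, is generated by a single self-adjoint operator $T=\int_\S^\oplus T(t)\,dt$ with each $T(t)\in\Mm_2(\CC)$ Hermitian. Since $T$ generates $\B$, the fibre $\B(t)=\{T(t)\}''$ is two-dimensional a.e., so $T(t)$ has two distinct eigenvalues $\lambda_1(t)<\lambda_2(t)$ a.e., and these are measurable in $t$, being the roots $\tfrac{1}{2}(\tr T(t)\pm\sqrt{(\tr T(t))^2-4\det T(t)})$ of the characteristic polynomial. Setting $P(t)=(\lambda_2(t)-\lambda_1(t))^{-1}(T(t)-\lambda_1(t)I)$ gives a measurable field of rank-one projections, so $P_\M:=\int_\S^\oplus P(t)\,dt\in\Mm_2(\A_0)$ is a well-defined projection with $P(t)$ rank-one a.e. (note that $P_\M$ is a genuine bounded projection even though $(\lambda_2-\lambda_1)^{-1}$ need not lie in $\Linf$, which is why I place $P_\M$ in $\B$ not by an algebraic formula but as follows). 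Since each $\B(t)$ is abelian and contains $P(t)$, the projection $P_\M$ commutes with all of $\B$, so $P_\M\in\B'=\B$; and comparing fibres shows that the von Neumann algebra generated by $\A$ and $P_\M$, whose fibre at $t$ is $\CC P(t)\oplus\CC(I-P(t))=\B(t)$, equals $\B$.

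I expect the main obstacle to be the measure-theoretic bookkeeping of the direct-integral step: justifying that $\B$ is decomposable, that both the commutant and the masa condition pass to almost every fibre, and then producing a genuinely measurable choice of $P(t)$ rather than merely a pointwise one. The single-generator reduction together with the explicit characteristic-root formula is what makes the selection canonical and lets me avoid an appeal to an abstract measurable-selection theorem, while the commutant identity $\B=\B'$ is what converts ``$P_\M$ commutes with $\B$'' into ``$P_\M\in\B$''.
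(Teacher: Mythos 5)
Your proof is correct and follows essentially the same route as the paper's: both reduce to a single self-adjoint generator of $\B$ inside $\Mm_2(\Linf)$ and then perform an explicit, manifestly measurable pointwise $2\times 2$ spectral computation to produce the rank-one projection field $P(t)$. The only differences are presentational --- you obtain $P(t)$ from the spectral-projection formula $(\lambda_2-\lambda_1)^{-1}(T(t)-\lambda_1 I)$ and spell out the direct-integral bookkeeping (including why $\B\subseteq\A'=\Mm_2(\A_0)$), whereas the paper conjugates by an explicitly constructed diagonalizing unitary $T(t)$ and sets $P=T^*\bigl(\begin{smallmatrix}1&0\\0&0\end{smallmatrix}\bigr)T$.
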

\begin{proof}
We first prove that there is a unitary  $T_\M\in\A'=\Mm(\A_0)$ such that $T_\M \B T_\M^*$ is diagonal. We find such a $T$ explicitly (the existence of such a $T$ may also follow directly from the fact that $\A$ is a von Neumann algebra of uniform multiplicity $2$). Let $B\in \Mm(\Linf)$ be self-adjoint such that $B_\M$ generates $\B$.  With no loss assume that we almost always have  $\tr(B(t))=0$ (replace $B$ by $B-\frac{1}{2}\tr(B)I$ if needed).  On places where $B$ is diagonal, define $T=I$.  On places where $B$ is not diagonal write 
$B=\begin{pmatrix} a & \overline{\xi} b \\ \xi b & -a\end{pmatrix}$, where $a,b\in\mathbb{R}$, $b>0$, and $|\xi|=1$.  Define $x=\frac{a}{\sqrt{a^2+b^2}}$ and let $$T=\frac{1}{\sqrt{2}}\begin{pmatrix} \sqrt{1+x} & \overline{\xi}\sqrt{1-x} \\ 
-\sqrt{1-x} & \overline{\xi}\sqrt{1+x} \end{pmatrix}.$$  Observe that $B=\sqrt{a^2+b^2} B_0$ where $$B_0=\begin{pmatrix} x & \overline{\xi}\sqrt{1-x^2} \\ \xi\sqrt{1-x^2} & -x \end{pmatrix}$$ and that $TB_0T^* = \begin{pmatrix} 1 &0 \\ 0 & -1 \end{pmatrix}$.

We now define $P=T^*\begin{pmatrix} 1 & 0 \\ 0 & 0\end{pmatrix}T$ and note that, by construction, $P(t)$ is always a projection of rank one.
\end{proof}

\begin{theorem}\label{thm-nonexistence} Let $\A$ and $U$ be as defined above.  Then there is no $U$-invariant masa $\B$ that contains $\A$.
\end{theorem}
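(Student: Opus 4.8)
I would argue by contradiction, reducing the statement, via Lemma~\ref{lem-projection}, to the nonexistence of a measurable field of rank-one projections obeying a cocycle identity over the irrational rotation $\sh_a$, and then extracting the obstruction from ergodicity.

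First, suppose a $U$-invariant masa $\B\supseteq\A$ exists. By Lemma~\ref{lem-projection} it is generated by $\A$ together with a projection $P_\M\in\Mm_2(\A_0)$ for which $P(t)$ is a rank-one projection for almost every $t$. Since $U^*\A U=\A$, the algebra $\B$ is $U$-invariant precisely when $U^*P_\M U\in\B$. Writing $U=V_\M W$ with $W=\Sh_a\ot I_2$ and computing fiberwise, one finds $(U^*P_\M U)(t)=V(t-a)^*\,P(t-a)\,V(t-a)$ almost everywhere. The only rank-one projections lying in $\B$ are, fiber by fiber, $P(t)$ and $P^\perp(t)=I-P(t)$, so invariance is equivalent to the pointwise relation
$$V(t-a)^*\,P(t-a)\,V(t-a)\in\{P(t),\,P^\perp(t)\}\qquad\text{for a.e. }t.$$

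Next I would translate this into dynamics on the sphere. A rank-one projection in $\Mm_2(\CC)$ is determined by its range, a point of the projective line; conjugation by a unitary acts on this $2$-sphere of projections as a rotation, and $P\mapsto P^\perp$ is the antipodal map. Writing $\mathsf n(t)$ for the class of $P(t)$ modulo the antipode, the displayed relation becomes the genuine, single-valued cocycle identity $\mathsf n(t)=R_{t-a}\,\mathsf n(t-a)$, where the rotation $R_t$ is trivial on $J_3$, is the order-four rotation induced by $V|_{J_1}$ on $J_1$, and is the order-three rotation induced by $V|_{J_2}$ on $J_2$ (a short computation shows $(V|_{J_2})^3$ is scalar). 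Everything is thereby reduced to showing this rotation cocycle over $(\S,\sh_a)$ admits no measurable invariant section.

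The heart of the argument is then an induction to $J_1=[0,a)$. Following an orbit as it leaves $J_1$, passes through the three unit pieces of $J_2$ and returns through $J_3$, the first-return holonomy is a fixed rotation $M$, and since the first-return map is a conjugate of the relative shift $\sh'_1$ (ergodic, with ergodic powers), the section obeys $\mathsf n\circ\sh'_1=M\,\mathsf n$ on $J_1$. The colatitude of $\mathsf n$ measured from the axis of $M$ is then $\sh'_1$-invariant, hence a.e. constant; on a nonpolar latitude the longitude would satisfy $\Psi\circ\sh'_1=\Psi+\theta$ with $\theta$ a fixed rational multiple of $2\pi$, so an appropriate power of $e^{\i\Psi}$ would be a Koopman eigenfunction of $\sh'_1$ with a root-of-unity eigenvalue, contradicting ergodicity of the corresponding power of $\sh'_1$. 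Hence $\mathsf n$ is forced to be a.e. the axis of $M$ on $J_1$, and similarly rigid on the other intervals after propagation. The main obstacle is the final step: converting this rigidity into an actual contradiction. One must propagate the value forced on $J_1$ through the order-three rotation of $V|_{J_2}$ across the successive pieces of $J_2$ and back through $J_3$, and verify that the specific matrices defining $V$ are arranged so that the axis rigidified on $J_1$ cannot simultaneously be compatible with the $J_2$-dynamics, so that no measurable $\mathsf n$, hence no $P_\M$ and no $\B$, can exist. Isolating this precise incompatibility, rather than merely producing an $M$-invariant measure on the sphere, is the delicate point where the particular unitaries and the ergodicity of the higher powers of $\sh_a$ and of $\sh'_1$ must all be used.
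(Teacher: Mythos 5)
Your reduction to a measurable invariant section of a rotation cocycle over $\sh_a$, and the rigidity argument on $J_1$ obtained by inducing to the first-return map (constant colatitude by ergodicity, then the Koopman-eigenfunction obstruction on any non-polar latitude), are correct, and they are in essence a cleaner geometric form of the paper's analysis of the sign vector $(\sgn(d),\sgn(c),\sgn(s))$. The genuine gap is precisely the step you defer: the hoped-for incompatibility between the axis forced on $J_1$ and the $J_2$-dynamics does not exist. Put $\sigma_x=\begin{pmatrix}0&1\\1&0\end{pmatrix}$, $\sigma_y=\begin{pmatrix}0&-\i\\ \i&0\end{pmatrix}$, $\sigma_z=\begin{pmatrix}1&0\\0&-1\end{pmatrix}$. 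A direct check shows that $V|_{J_1}$ commutes with $\sigma_y$ (indeed $V|_{J_1}=\cos(\pi/4)I-\i\sin(\pi/4)\sigma_y$), so the axis your argument forces on $J_1$ is the class of $\sigma_y$; and conjugation $X\mapsto V|_{J_2}^{*}XV|_{J_2}$ cyclically permutes $\sigma_y\mapsto\sigma_z\mapsto\sigma_x\mapsto\sigma_y$. Since $J_2$ is exactly three translates of $[0,a)$ and $V|_{J_3}=I$, the propagated section closes up: the piecewise constant field $S=\sigma_y$ on $[0,2a)\cup[4a,1)$, $S=\sigma_z$ on $[2a,3a)$, $S=\sigma_x$ on $[3a,4a)$ satisfies $S(t+a)=V(t)^{*}S(t)V(t)$ for every $t$ (all cases, including the wrap-around past $1$, follow from the two conjugation facts). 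Consequently $P=\frac12(I+S)$ gives a projection $P_\M$ with $U^{*}P_\M U=P_\M$, and $\A$ together with $P_\M$ generates a $U$-invariant masa containing $\A$.

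So the step you flag as ``the delicate point'' is not a verification to be completed but the place where the approach collapses: your own rigidity analysis, pushed one step further, yields an explicit invariant section rather than a contradiction. The failure is structural: because $V$ is constant on each translate $[ja,(j+1)a)$ and is the identity on $J_3$, the first-return holonomy to $[0,a)$ is a single fixed rotation $M$, and the orbit of the axis of $M$ under the stepwise holonomies always reassembles into a consistent piecewise-constant section, whatever the two unitary blocks are. (This also locates the flaw in the paper's own argument: the claim that the set where $S(t)$ is diagonal is essentially $\sh_a$-invariant --- justified by the assertion that neither $S(t)$ nor $S(t+a)$ can be diagonal where $V(t)$ is not a weighted permutation --- is contradicted by the section above, which is diagonal exactly on $[2a,3a)$; what is true is only that $S(t)$ and $S(t+a)$ cannot both be diagonal there.) No argument along these lines can succeed for the given $V$; one would need a $V$ whose return holonomy to a fundamental domain is genuinely non-constant.
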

\begin{proof}
Suppose, toward a contradiction, that there is a $U$-invariant masa $\B$ containing $\A$.  Let $P_\M\in\B$ be a projection such that $P(t)$ is a rank-one projection for almost every $t$ (guaranteed by the Lemma
\ref{lem-projection} above).  Since $U P_\M U^*=(V (P_a) V^*)_\M$ commutes with $P_\M$, we must have that for almost every $t$, 
$$V^*(t)P(a+t)V(t)\in\{P(t),I-P(t)\}.$$  For $S=2P-I$ this is equivalent to the equation
$$
V(t) S(a+t) V^*(t)= \pm S(t),
$$
or, equivalently,
$$
S(a+t)=\pm V^*(t)S(t)V(t).
$$
Note that when $V(t)$ is not a weighted permutation, then neither $S(t)$ nor $S(a+t)$ can be diagonal.  When $V(t)$ is a weighted permutation (e.g., identity), then $S(t)$ is diagonal if and only if $S(a+t)$ is.  Hence the set $X_D\subseteq \S$ of points where $S(t)$ is diagonal is essentially invariant under $\sh_a$.  Since $\sh_a$ is ergodic we therefore must have that $X_D$ has either measure $0$ or measure $1$.  The latter is impossible, since $V(t)$ is not a weighted permutation on $[0,4a)$.  Hence we must have that $S$ is almost never diagonal.  Now write $S=\begin{pmatrix} d & \overline{\theta} e\\
\theta e & -d\end{pmatrix}$ where almost everywhere $d,e\in\mathbb{R}$, $e>0$, and $|\theta|=1$.  We additionally write $\theta=c+\i s$, where $c,s\in\mathbb{R}$ and $c^2+s^2=1$.  

We now examine the equation $$S_a=\pm V^* S V$$ 
on the sets $J_1, J_2, J_3$ (defined while describing $V$) in terms of $d,e,c,s$ by comparing real and imaginary parts of the $(1,1)$ and $(2,1)$ entries of these matrices:

On $J_1$ we get $d_a=\pm \re(\theta) e$ and $\theta_a e_a=\pm(-d+\im(\theta) i e)$, or, equivalently
\begin{eqnarray*}
d_a &=& \pm c e, \\
c_a e_a &=& \mp d, \\
s_a e_a &=& \pm s e.
\end{eqnarray*}

On $J_2$ we get $d_a=\pm \im(\theta) e$ and $\theta_a e_a=\pm(d+\re(\theta) i e)$, or, equivalently
\begin{eqnarray*}
d_a &=& \pm s e, \\
c_a e_a &=& \pm d, \\
s_a e_a &=& \pm c e.
\end{eqnarray*}

On $J_3$ we clearly have
\begin{eqnarray*}
d_a &=& \pm d, \\
c_a e_a &=& \pm c e, \\
s_a e_a &=& \pm s e.
\end{eqnarray*}

Let $$X=\{-1,0,1\}\times \{-1,0,1\}\times \{-1,0,1\}.$$  We define an equivalence relation $\sim$ on $X$ by 
$$(p',q',r')\sim (p,q,r) \iff (p',q',r')=\pm (p,q,r).$$  The set $X$ modulo this equivalence relation is denoted by 
$$X^{\pm}=X/\sim.$$  

Define maps $\alpha_j\colon X^\pm\to X^{\pm}$, $j=1,2,3$ by
\begin{eqnarray*}
\alpha_1(p,q,r)&=&(q,-p,r), \\
\alpha_2(p,q,r)&=&(r,p,q),\\
\alpha_3(p,q,r)&=&(p,q,r).
\end{eqnarray*}

We also define a map $\varphi\colon \S\to X$ by
$$\varphi(t)=\left(\sgn(d(t)), \sgn(c(t)), \sgn(s(t))\right).$$  Here $\sgn\colon\mathbb{R}\to\{-1,0,1\}$ denotes the signum map, i.e., 
$$
\sgn(x)=\begin{cases} 1 & x>0 \\
0 &, x=0 \\
-1 &, x<0 \end{cases}.
$$
We abuse the notation by also using $\varphi$ to denote the corresponding map from $S^1$ to $X^{\pm}$.  Note that $\varphi$ is a well-defined measurable map and that for $t\in J_j$, $j=1,2,3$, we have that 
$$
\varphi(\sh_a(t)) = \alpha_j(\varphi(t)).
$$

For $k=0,1,2,3$ let $X_k^\pm$ denote the subset consisting of those elements $(p,q,r)$ where exactly $k$ of them are zero.  Define also $Y_k=\varphi^{-1}(X_k^\pm)$.  Note that these sets are measurable and they partition $\S$ into a union of disjoint sets. Note that sets $X^\pm_k$, $k=0,\ldots, 3$ are invariant under all $\alpha_j$'s and hence the sets $Y_k$ are essentially invariant under $\sh_a$.  Since $\sh_a$ is ergodic, we therefore must have that one of these sets has measure $1$ and the rest have measure $0$.  We will show that it is impossible for any of these sets to have measure $1$, thereby arriving at a contradiction with the initial assumption that $\A$ can be embedded into a $U$-invariant masa $\B$.

\textbf{Case $\m(Y_3)=1$.}  This is clearly impossible as it would imply that $c=s=0$ almost everywhere (recall that $c^2+s^2=1$).

\textbf{Case $\m(Y_2)=1$.}  Since $c$ and $s$ cannot be zero simultaneously,  we must therefore have that $d=0$ almost everywhere and that almost always $cs$ is zero as well.  So one of the sets $Z_1=\{t\in[0,a): \varphi(t)=(0,\pm 1, 0)\}$, $Z_2=\{t\in[0,a): \varphi(t)=(0,0, \pm 1)\}$ must have strictly positive measure.  But then one of the sets $Z_1'=\sh_{a}(Z_1)$, $Z'_2=\sh_{2a}(Z_2)$ must have a strictly positive measure as well. Note that for $t\in[0,a)$, $\varphi(\sh_{a}(t)) = \alpha_1(\varphi(t))$ and $\varphi(\sh_{2a}(t)) = \alpha_2(\alpha_1(\varphi(t))$.  Hence on both $Z'_1$ and $Z'_2$ we must then have that $c=s=0$ almost everywhere.  But this is clearly impossible, since at least one of these sets must have strictly positive measure.

\textbf{Case $\m(Y_1)=1$.}  Let $b=1-4a$.  Define the map $\sh'_b\colon [0,a)\to [0,a)$ by $sh'_b(t)=b+t\, (\mbox{mod } a\mathbb{Z})$, i.e., $sh'_b(t)$ is the unique $t'\in[0,a)$ such that $\frac{(b+t)-t'}{a}\in\mathbb{Z}$.  Note that $\sh'_b(t)=(\sh_a)^n(t)$ where $n=n(t)\in\mathbb{N}$ is the smallest number such that $(sh_a)^n(t)\in [0,a)$ (here $(\sh_a)^n$ denotes the $n$-fold composition $\sh_a\circ\sh_a\circ\ldots\circ\sh_a$).  Hence for $t\in [0,a)$ we have that $\varphi(\sh'_b(t))=\alpha_1(\varphi(t))$.  This is because $\alpha_2^3=\id$ and $\alpha_3=\id$.  Define $D_1=\{(p,q,0)\in X_1^\pm: pq=1\}, D_2=\{(p,q,0)\in X_1^\pm: pq=-1\}$, $D_3=\{(p,0,r)\in X_1^\pm\}$, $D_4=\{(0,q,r)\in X_1^\pm\}$ and let $E_j=\varphi^{-1}(D_j)\cap [0,a)$ for $j=1,\ldots, 4$.  Since $\alpha_1^4=\id$ we have that these sets are essentially invariant under $(\sh'_b)^4=\sh'_{4b}$.  As this map is ergodic (since $\frac{4b}{a}=\frac{4}{a}-16$ is irrational) we conclude that one of these sets $E_1,\ldots, E_4$ must have measure $a$.  But $E_2=\sh'_b(E_1)$, $E_1=\sh'_b(E_2)$,  $E_4=\sh'_b(E_3)$ and $E_3=\sh'_b(E_4)$, so $\m(E_1)=\m(E_2)$ and $\m(E_3)=\m(E_4)$.  This is clearly impossible.

\textbf{Case $\m(Y_0)=1$.}  The reasoning here is very similar to the argument in the case above.  We again use the map $\sh'_b$ introduced in the case above.  We define sets $F_1=\{(p,q,r)\in X^\pm_0: (pr,qr)=(1,1)\}, F_2=\{(p,q,r)\in X^\pm_0: (pr,qr)=(1,-1)\}$, $F_3=\{(p,q,r)\in X^\pm_0: (pr,qr)=(-1,-1)\}$,
$F_4=\{(p,q,r)\in X^\pm_0: (pr,qr)=(-1,1)\}$ and sets $G_j=\varphi^{-1}(F_j)\cap [0,a)$, $j=1,\ldots, 4$.  Since $\alpha_1(F_1)=F_2$, $\alpha_1(F_2)=F_3$, $\alpha_1(F_3)=F_4$, $\alpha_1(F_4)=F_1$ we conclude that $\sh'_b(G_1)\subseteq G_2, \sh'_b(G_2)\subseteq G_3, \sh'_b(G_3)\subseteq G_4, \sh'_b(G_4)\subseteq G_1$ and hence all sets $G_1,\ldots, G_4$ have equal measure.  But all of these sets are essentially invariant under ergodic $(\sh'_d)^4=\sh'_{4d}$ and hence one of them must have measure $a$; contradicting the above conclusion that they all have equal measure.
\end{proof}

\section*{Acknowledgments}
M.~Mastnak was supported in part by the NSERC Discovery Grant 371994-2019.

\end{document}